\newtheorem{theorem}{Theorem}[section]
\newtheorem{proposition}[theorem]{Proposition}
\newtheorem{lemma}[theorem]{Lemma}
\newtheorem{corollary}[theorem]{Corollary}
\newtheorem{remark}[theorem]{Remark}
\theoremstyle{definition}
\newtheorem{example}[theorem]{Example}
\numberwithin{equation}{section}
\def\R{\mathbb{R}}                              
\def\C{\mathbb{C}}                              
\def\CalC{\mathcal{C}}
\def\CalO{\mathcal{O}}
\def\B{{\Bbb B}}
\def\T{{\Bbb T}}
\begin{document}

	\title[Truncated tube domains with multi-sheeted envelope]
	{Truncated tube domains with multi-sheeted envelope}

	\author{Suprokash Hazra}
	\curraddr{Department of Mathematics\\ Mid Sweden University\\
		SE-851 70 Sundsvall\\ Sweden} \email{suprokash.hazra@miun.se, Egmont.Porten@miun.se}
	\author{Egmont Porten}

	\keywords{Envelopes of holomorphy, truncated tube domains, special phenomena in complex dimension two.}
	\subjclass[2020]{Primary 32D10, 32D26, 32Q02  ; Secondary 32V25, 32E20.}

	\begin{abstract}	
		The present article is concerned with a group of problems raised by J.~Noguchi and M.~Jarnicki/P.~Pflug, 
		namely whether the envelopes of holomorphy of truncated tube domains are always schlicht, i.e.~subdomains of $\C^n$,
		and how to characterise schlichtness if this is not the case.
		By way of a counter-example homeomorphic to the $4$-ball, we answer the first question in the negative.
		Moreover, it is possible that the envelopes has arbitrarily many sheets.
		The article is concluded by sufficient conditions for schlichtness in complex dimension two.		
	\end{abstract}
	
	\maketitle
	
	\section{Introduction}
	A domain $D\subset\C^n$ is said to be a \emph{tube domain} 
	if there are domains $X$, $Y\subset\R^n$
	such that $D=X+iY$. 
	In case that one of the domains $X$, $Y$ equals $\R^n$, $D$ is a \emph{Bochner tube domain}.
	Otherwise $D$ is often called \emph{truncated} tube domain. 
	A classical theorem of Bochner \cite{B} shows the envelope of holomorphy of $X+i\R^n$ 
	to be the tube $\mbox{conv}(X)+i\R^n$ over the convex hull $\mbox{conv}(X)$, 
	meaning in particular that the envelope is schlicht.
	
	Truncated tube domains are much less understood. 
	While the geometry of pseudoconvex tube domains is still simple 
	(see \cite{P} and \cite{N} for an alternative approach),
	the investigation of the envelope becomes subtle in the nonpseudoconvex case.
	Recently, questions about multi-sheetedness of these envelopes were raised
	by  J.~Noguchi \cite{N} and by M.~Jarnicki and P.~Pflug \cite{JP2}.
	More precisely, it is asked in \cite[Rem.4]{JP2} whether these envelopes are always schlicht,
	and in \cite[Problem 4.4]{N} whether simply connected or contractible tube domain have schlicht envelope.
	In the present article, we will present counter-examples answering both of the questions in the negative,
	namely tube domains homeomorphic to the $4$-ball 
	whose envelopes have arbitrarily many sheets.
	
	Let us review some terminology and results on envelopes, see \cite{JP1} for exhaustive information. 
	It is classical that every domain $D\subset\C^n$ admits a
	Riemann domain $\pi:\textsf{E}(D)\rightarrow\C^n$ 
	(unique up to isomorphisms of Riemann domains over $\C^n$)
	with the following properties:
	\begin{itemize}
		\item[(i)\phantom{ii}] 
		There is an embedding $\alpha:D\hookrightarrow\textsf{E}(D)$
		with $\pi\circ\alpha=\mbox{id}_D$.
		\item[(ii)\phantom{i}] For every $f\in\CalO(D)$, the lift $f\circ(\pi|_{\alpha(D)})
		$ extends holomorphically to $\textsf{E}(D)$,
		\item[(iii)] $\textsf{E}(D)$ is Stein.
	\end{itemize}
	This domain is called the \emph{envelope of holomorphy of $D$}, see \cite{JP1} for exhaustive information.
	We say that $\textsf{E}(D)$ is \emph{$k$-sheeted over $z\in\C^n$} 
	if $\# \pi^{-1}(z)=k$. 
	For $U^{\rm open}\subset\C^n$, a sheet lying \emph{uniformly over $U$}
	is an open set $V\subset\textsf{E}(D)$ such that $\pi|_V$ is a homeomorphism onto $U$.
	The latter notion is mainly interested for an infinite number of sheets.
	If $\textsf{E}(D)$ is $k$-sheeted over $z\in\C^n$ with $k<\infty$,
	then there are $k$ sheets lying uniformly over any sufficiently small
	neighbourhood of $z$.
	
	The following is our main result.
	
	\begin{theorem}\label{main}
		For every integer $k\geq 2$ there is a domain $X_k\Subset\R^2$
		with $\CalC^\infty$-smooth boundary and a radius $r_k>0$ such that
		the envelope of holomorphy of $D_k=X_k+iB_{r_k}$ has at least $k$ sheets
		over some point in $\C^2$. 
		Furthermore, there are a domain $X_\infty\Subset\R^2$ and $r_\infty>0$ 
		such that $\textsf{E}(X_\infty+iB_{r_\infty})$ has infinitely many sheets lying uniformly 
		over a neighbourhood of a circle embedded in $\C^2$. 
		The domains $X_k$ and $X_\infty$ can be constructed diffeomorphic to the open $4$-ball.
	\end{theorem}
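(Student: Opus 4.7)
The plan is to construct $X_k$ as a smooth simply connected planar domain that nearly wraps $k$ times around a central point, and to choose $r_k$ so that the continuity principle in $\C^2$ forces an iterated extension of holomorphic functions on $D_k = X_k + iB_{r_k}$ producing $k$ sheets above a common region. The geometric input is that a bent smooth strip in $\R^2$ is topologically a disc, so the product $X_k \times B_{r_k}$ is diffeomorphic to $D^4$; this already takes care of the final assertion in the theorem. The analytic input is that such a strip can be shaped so that consecutive turns are separated by narrow radial gaps, and that holomorphic functions on $D_k$ extend across each gap by attaching a family of analytic discs with boundaries in $D_k$ whose centres sweep the gap, using the imaginary freedom supplied by $B_{r_k}$.

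I would first establish a single-gap model: a \emph{C}-shaped $X^{(1)} \subset \R^2$ with an inner dimple and a radius $r^{(1)}$ such that an explicit family of analytic discs with boundaries in $X^{(1)} + iB_{r^{(1)}}$ has centres bridging the dimple, giving a Hartogs-type enlargement of the envelope tailored to the tube setting. Once this local model is in place, assemble $X_k$ by chaining $k-1$ such \emph{C}-shaped pieces into a near-annular smooth strip that winds $k$ times around a central region, keeping the two endpoints apart so the whole domain remains simply connected and smoothly bounded. Applying the local extension successively across the $k-1$ gaps yields $k$ distinct branches of continuation ending at a common point $z^\ast \in \C^2$, hence $k$ sheets of $\textsf{E}(D_k)$ over that point.

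The main obstacle, and the point where serious work is required, is to verify that these $k$ branches correspond to $k$ pairwise different sheets of the envelope, not to the same sheet reached along different paths. For this I would exhibit a single holomorphic function on $D_k$ whose continuations along the $k$ routes assume pairwise distinct values at $z^\ast$. A natural candidate is a branch of $\log h(z_1,z_2)$, where $h$ is chosen so that its zero set sits in the cavity around which the spiral turns; the monodromy of $\log h$ around that zero set then detects the sheets and provides the required obstruction. The infinite-sheeted case is obtained in the limit, letting $X_\infty$ be a smooth spiralling strip with infinitely many turns accumulating from one side on a smooth circle, with the widths of the gaps and the extension radius $r_\infty$ arranged so that the family of bridging disc constructions is uniform; this uniformity is exactly what yields an embedded circle in $\C^2$ over which infinitely many sheets lie uniformly.
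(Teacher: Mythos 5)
Your overall strategy (a winding planar domain, extension across the tube over convex arcs by analytic discs, and the monodromy of a logarithm to separate sheets) is the same as the paper's, and your description of the infinite case --- a spiral accumulating from one side on a circle, with uniform disc constructions --- is essentially the construction actually used. The genuine gap is in your finite-$k$ step: ``applying the local extension successively across the $k-1$ gaps yields $k$ distinct branches of continuation ending at a common point $z^\ast$.'' A Levi/disc extension across one gap only produces a new germ on a small one-sided neighbourhood $U_1$ of the gap hypersurface, lying in the next turn of the strip. To feed this germ into the extension across the \emph{next} gap, or to transport it to a common point $z^\ast$, you must continue it along the next turn on a \emph{new} sheet of the envelope; for an arbitrary $f\in\CalO(D_k)$ there is no reason the extended germ continues beyond $U_1$, so the chain is not justified. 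Moreover the $k$ routes you describe terminate in $k$ different annular shells of the spiral (the turns are at different radii), so even granting the chaining you do not automatically get $k$ germs over one point: you get at most two germs over any given point unless all the extension regions overlap.

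The paper resolves exactly this by avoiding iteration: the turns of the spiral accumulate in $\CalC^2$-sense on the limit circle $S=\{|x|=1\}$, the Levi extension radius $\eta$ is uniform over the accumulating arcs, and hence for all large $j$ the \emph{one-step} extension from the $j$-th turn already reaches a fixed ball $\B_\eta(\cos\tau,\sin\tau)$ about a point of $S$. The $k$ (or infinitely many) germs at that common point come from $k$ \emph{parallel} one-step extensions of the original $f$ restricted to $k$ different turns --- no propagation of extended germs is needed --- and $X_k$ is simply a truncation of $X_\infty$ after enough turns. To repair your argument you should adopt this for the finite case as well. Two smaller points: (i) you need an \emph{upper} bound on $r_k$, not just a choice making the discs work --- the complex line $\{h=0\}$ is an unbounded curve and must be kept disjoint from the whole tube $X_k+iB_{r_k}$ (the paper's constant $R$ in (\ref{R})) for a branch of $\log h$ to exist on $D_k$ and for the winding in $\R^2$ to translate into monodromy about $\{h=0\}$; (ii) the smoothness of $\partial X_k$ and the $4$-ball statement are as easy as you say, but note that $\partial X_\infty$ is necessarily non-smooth along the limit circle.
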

	
	Throughout, $B_r(y_0)$ denotes the round ball $\{y\in\R^n:|y-y_0|<r\}$ in $\R^n$,
	in constrast to balls in $\C^n$, denoted by $\B_r(z_0)$.
	If $y_0=0$, we abreviate $B_r=B_r(0)$. 
	Note however that the exact shape of the domains in $y$-direction is not very important.
	Moreover, it is easy to derive corresponding examples of tube domains in $\C^n$, $n\geq 3$.
	More interesting is the difference concerning boundary regularity for $X_k$, $k<\infty$, and $X_\infty$.
	The authors do not know whether the envelope is always finitely sheeted 
	if both $X$ and $Y$ are bounded and have smooth boundary.
	This may be viewed as a minor variant of the 
	long-standing open problem raised by B.~Stens\o nes (Problem 3.3.1, \cite{prob})
	whether envelopes of bounded domains with smooth boundary are finitely sheeted.
	
	On the other side, one may still look for sufficient conditions ensuring schlichtness.
	In recent work, M.~Jarnicki and P.~Pflug \cite{JP2}  explicitly determined the envelopes of the model domains
	\begin{equation}\label{model}
	D_{n,r_1,r_2,r_3}=\{x\subset\R^n:r_1<|x|<r_2\}+iB_{r_3}\subset\C^n, 
	\end{equation}
	where $n\geq 2, 0\leq r_1<r_2,\ 0<r_3$, to be schlicht and equal to
	\begin{equation}\label{ED}
	\textsf{E}(D_{n,r_1,r_2,r_3})=\{|x|<r_2,|y|<r_3,|y|^2<|x|^2-(r_1^2-r_3^2)\}.
	\end{equation}
	Motivated by their question about generalizations, we present a general schlichtness theorem for $n=2$.
	
	\begin{theorem}\label{schlicht2d}
	Let $X\subset\R^2$ be a convex domain with finitely many  holes with strictly convex $\CalC^2$-boundary,
	and let $Y\Subset\R^2$ be a convex domain.
	Then the envelope of holomorphy of $D=X+iY$ is schlicht.
	\end{theorem}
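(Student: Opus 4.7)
The plan is to exhibit a schlicht, pseudoconvex open set $\tilde D\subset\C^2$ containing $D$ to which every $f\in\CalO(D)$ extends holomorphically; the universal property of the envelope then identifies $\textsf{E}(D)=\tilde D$, establishing schlichtness. Write $X=X_0\setminus\bigcup_{j=1}^{N}\overline{K_j}$ with $X_0$ convex and the $K_j$ pairwise disjoint, strictly convex with $\CalC^2$ boundary. Choose strictly convex $\CalC^2$ defining functions $\rho_j$ for the holes and convex defining functions $\rho_0,\rho_Y$ for $X_0,Y$, so that $\partial D$ splits into pieces over $\partial X_0$, over each $\partial K_j$, and over $\partial Y$.

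The first key step is strict pseudoconcavity at the inner boundary. At any $(x_0,y_0)\in\partial K_j+iY$, the local defining function of $D$ is $-\rho_j(\operatorname{Re} z)$; a direct computation shows its complex Hessian to be $-\tfrac14\operatorname{Hess}_x\rho_j$, which is negative definite on the (complex-one-dimensional) complex tangent space by strict convexity of $\rho_j$. Thus $\partial D$ is strictly pseudoconcave along all of $\partial K_j+iY$, and by the Lewy--Hartogs extension theorem every $f\in\CalO(D)$ extends to a full two-sided $\C^2$-neighbourhood of each such point. Globalising yields an open extension domain $D'\supset D$ containing a tubular collar about each $\partial K_j+iY$.

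The second step pushes the extension deeper into each $K_j+iY$ via the continuity principle, following the pattern of Jarnicki--Pflug \cite{JP2}. After a complex-affine change of coordinates that preserves the tube structure, $K_j$ is locally approximated by a round disk, and a suitable family of analytic disks with boundaries in $D'$ sweeps out a maximal reachable open set in $K_j+iY$. The union of $D'$ with these reachable sets is the desired $\tilde D\subset X_0+iY\subset\C^2$; schlichtness is automatic, since the holes are pairwise disjoint and the extensions into different $K_j$ live in disjoint tubes. Its boundary decomposes into (a) the outer piece on $\partial X_0+iY$, pseudoconvex by convexity of $X_0$; (b) the $y$-piece on $X_0+i\partial Y$, pseudoconvex by convexity of $Y$; and (c) an inner piece inside each $K_j+iY$ bounded by a plurisubharmonic hypersurface built from $\rho_j$ and $\rho_Y$---in the round case the pluriharmonic level set $\{\operatorname{Re}(z_1^2+z_2^2)=r_1^2-r_3^2\}$. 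Combined with the extensions of the previous steps, $\tilde D$ is a Stein domain canonically identified with $\textsf{E}(D)$.

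\emph{Main obstacle.} The crux is the explicit construction of the reachable region inside each $K_j+iY$ and the verification that its boundary is pseudoconvex. For the round model of \cite{JP2} the envelope boundary is a pluriharmonic level set, so Levi-flatness comes for free; for a general strictly convex $K_j$ and convex $Y$, however, the naive candidate $\{\rho_j(x)=\rho_Y(y)\}$ need not be pseudoconvex because $\operatorname{Hess}_x\rho_j$ and $\operatorname{Hess}_y\rho_Y$ generally do not match. One must therefore either engineer the correct plurisubharmonic exhaustion from $\rho_j$ and $\rho_Y$---for instance via a Legendre-type construction---or track the continuity-principle extension dynamically until it halts at a genuine pseudoconvex barrier. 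The strict convexity of $\partial K_j$ is essential at this juncture, since Theorem \ref{main} produces multi-sheeted envelopes already in $\C^2$ once this convexity is dropped.
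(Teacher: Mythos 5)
Your first step (strict pseudoconcavity of $\partial K_j+iY$ and Levi extension to a collar) is fine and matches the paper's starting point. But the proposal has a genuine gap exactly where you flag the ``main obstacle'': you never construct the reachable region inside $K_j+iY$, never identify where the extension must stop, and never verify pseudoconvexity of the resulting boundary. These are not technical loose ends --- they are the entire content of the theorem. The paper's resolution is to identify the obstruction set precisely as the polynomial hull $\widehat{K}$ of the ``torus'' $K=K_x+i\partial Y$ ($K_x$ the closed convex region bounded by the hole), and to prove $\textsf{E}(D)=D'\backslash\widehat{K}$ (Lemma \ref{polhull}). The extension is carried out not by an ad hoc family of analytic discs but by a Morse-theoretic continuity scheme along the sublevel sets of a strictly plurisubharmonic function vanishing exactly on $\widehat{K}$ (following \cite{MP2}), and pseudoconvexity of $D'\backslash\widehat{K}$ then follows from S\l odkowski's theorem that polynomial hulls are pseudoconcave at points of the essential hull $\widehat{K}\backslash K$ --- a fact specific to the codimension available in $\C^2$, and the reason the theorem is stated only for $n=2$ (see Remark \ref{n>=3}). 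Your proposal contains no substitute for this input; without it, ``track the continuity-principle extension until it halts at a genuine pseudoconvex barrier'' is a restatement of the problem, not a proof. Also, reducing a general strictly convex hole to the round model of \cite{JP2} by a local affine approximation cannot work: the Jarnicki--Pflug computation is global (it rests on the entire function $e^{z_1^2+z_2^2}$ and the global foliation by the curves $z_1^2+z_2^2=c$) and does not localize.

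A second, independent error is the claim that ``schlichtness is automatic, since the holes are pairwise disjoint and the extensions into different $K_j$ live in disjoint tubes.'' The danger of multi-sheetedness is not interference between different holes but monodromy around a \emph{single} hole: continuing $f$ into $K_j+iY$ along different arcs of $\partial K_j$ could a priori produce different germs over the same point. This is precisely the mechanism behind the counterexamples of Theorem \ref{main}, where one annular (or spiral) hole already forces arbitrarily many sheets. The paper addresses this monodromy explicitly in the proof of Lemma \ref{polhull} (it is manageable there because the gluing happens across a strictly \emph{pseudoconvex} hypersurface $M$, cf.\ \cite{LP}), and the strict convexity of the holes is what rules out the spiral-type geometry of Section \ref{sec2}. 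Declaring single-valuedness ``automatic'' assumes away the very phenomenon the theorem is about.
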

	
	Strict convexity in the above statement is meant in the sense of nonzero curvature,
	see Section 4 for details.
	Note that the domains $D_{2,r_1,r_2,r_3}$ satisfy the assumptions with $X=\{r_1<|x|<r_2\}$.
	In the general situation, one cannot expect to obtain a description of $\textsf{E}(D)$
	as explicit as in (\ref{ED}).  
	Actually, Lemma \ref{polhull} shows that this is essentially as hard as describing polynomial hulls.
	However, Theorem \ref{schlicht2d} permits to derive some qualitative consequences.
	Our proof essentially relies on phenomena special to complex dimension two,
	and the corresponding question in higher dimension remains open.
	 	
%
	
	The article is organised as follows. In Section 2, we construct the counter-examples mentioned in the main result.
	The idea will be to produce a helix with many turns in the envelope of holomorphy
	and lying over a circle in $\C^2$. 
	The question whether this helix can arranged to be maximal in the sense of universal coverings, 
	is discussed and answered in Section 3. Moreover, the relation of our construction to limit cycles of laminations will become apparent.
	Section 4 finally contains the proof of Theorem \ref{schlicht2d} and the discussion of some structural consequences.\\

	{\bf Acknowledgements:} The authors would like to thank Professor Peter Pflug for drawing their attention to his recent article
	and for a lot of valuable remarks helping to ameliorate the quality of the article. 
	They would also like to thank Professor Junjiro Noguchi for his interest in their work.
	
	\section{Proof of the main result}\label{sec2}
	The present section is dedicated to the {\bf proof of Theorem \ref{main}}, organised in four major steps.\\
	
	{\bf Step 1: Geometric preparations.}
	For a complex line $L=\{az_1+bz_2=0\}\subset\C^2$, $(a,b)\not=(0,0)$,
	the logarithm $f_L=\log(az_1+bz_2)$ can be viewed as
	a univalent holomorphic function on the universal covering 
	$\pi_{\rm univ}:Z\rightarrow\C^2\backslash L$
	of $\C^2\backslash L$.
	Of course, $Z$ equals essentially $Z_{\log}\times\C$ where $Z_{\log}$ is the Riemann surface
	of the logarithm in one complex variable.
	More precisely, if another complex line $L'$ meets $L$ transversally at the origin, 
	it is clear that $\pi_{\rm univ}^{-1}(L')$ is the universal covering of $L'\backslash\{0\}$
	(and is equivalent to $Z_{\log}$).
	
	Recall that the set $\mathcal T$ of all real planes $\Pi$ intersecting $L$ transversally at $0$
	is a topological open $4$-ball. 
	To see this, we pick one such plane $\Pi_0$ and select real linear coordinates $s_1,s_2,t_1,t_2$
	such that $L=\{s_1=s_2=0\}$, $\Pi_0=\{t_1=t_2=0\}$. Then the planes $\Pi\in{\mathcal T}$ are represented as graphs 
	\[
	\left(\begin{array}{c} t_1 \\ t_2\end{array}\right)=A(\Pi)\left(\begin{array}{c} s_1 \\ s_2\end{array}\right),
	\]
	establishing a 1-1 correspondence between $\Pi\in{\mathcal T}$ and the real $2\times 2$ matrices $A(\Pi)$.

	An easy homotopy argument shows 
	that we can replace $L'$ above by any $\Pi\in{\mathcal T}$.
	In particular, $Z$ is still connected with infinitely many sheets over $\Pi\backslash\{0\}$.
	If $\gamma$ is a loop surrounding $0$ once
	within $\Pi$, following $\gamma$ one turn takes us to a new sheet 
	and changes thus the value of $\log(az_1+bz_2)$ by $\pm 2\pi i$.
	
	We summarise what we need in the sequel.
	
	\begin{lemma}
	Let $L\subset\C^2$ be a complex line and $\Pi\subset\C^2$ a real $2$-plane 
	which intersect transversally at the origin. 
	Let $\iota:\T=\{\zeta\in\C:|\zeta|=1\}\hookrightarrow\Pi\backslash\{0\}$ be a continuous embedding
	that surrounds $0$ in $\Pi$ precisely once, 
	$D\subset\C^2$ a simply connected domain containing $\iota(\T\backslash\{1\})$ but not $\iota(\{1\})$,
	and $F_L$ a continuous branch of $f_L$ on $D$. 
	Then the two one-sided limits 
	$\lim_{\theta\downarrow 0}F_L(e^{i\theta})$ and $\lim_{\theta\uparrow 2\pi}F_L(e^{i\theta})$
	exist and differ by $\pm2\pi i$.
	\end{lemma}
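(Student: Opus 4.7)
The plan is to reduce the statement to the monodromy computation already established in the paragraphs preceding the lemma, namely that one turn around $0$ within $\Pi$ changes the value of $\log(az_1+bz_2)$ by $\pm 2\pi i$. The crucial observation is that, although $F_L$ is only defined on $D$ and misses the point $\iota(1)$, the function
\[
h(\theta):=a\iota_1(e^{i\theta})+b\iota_2(e^{i\theta}),\qquad\theta\in[0,2\pi],
\]
is continuous and nowhere zero on the \emph{closed} interval, because $\iota(\T)\subset\Pi\backslash\{0\}$ and $\Pi\cap L=\{0\}$ by transversality. A continuous nonvanishing complex-valued function on an interval always admits a continuous logarithm, so there exists $G\colon[0,2\pi]\to\C$ with $\exp\circ G=h$. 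Since $\iota$ surrounds $0$ once in $\Pi$, the monodromy statement recalled above yields $G(2\pi)-G(0)=\pm 2\pi i$.

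Next I would compare $G$ with $F_L$ along the loop. For $\theta\in(0,2\pi)$, set $\tilde F(\theta):=F_L(\iota(e^{i\theta}))$; this is well-defined and continuous because $\iota(e^{i\theta})\in D$ for such $\theta$, and by assumption $\tilde F$ is a branch of $\log h$ there. On the connected interval $(0,2\pi)$ both $\tilde F$ and $G|_{(0,2\pi)}$ are continuous branches of $\log h$, hence their difference takes values in $2\pi i\Z$, and by continuity equals a constant $2\pi ik$ with $k\in\Z$. Therefore $\tilde F=G|_{(0,2\pi)}+2\pi ik$ extends continuously to $[0,2\pi]$, the two one-sided limits exist and equal $G(0)+2\pi ik$ and $G(2\pi)+2\pi ik$ respectively, and their difference is exactly
\[
G(2\pi)-G(0)=\pm 2\pi i.
\]

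The only step that is not purely mechanical is the appeal to the preceding monodromy claim that $G(2\pi)-G(0)=\pm 2\pi i$. This rests on the homotopy argument sketched just before the lemma, which identifies the generator of $\pi_1(\C^2\backslash L)$ with the class of a small loop around $0$ in any real $2$-plane transverse to $L$; once this is invoked, no further information about the simply connected domain $D$ is required, since simple connectedness is used only to guarantee that a continuous branch $F_L$ of $f_L$ exists on $D$ in the first place.
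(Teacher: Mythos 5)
Your argument is correct and follows essentially the same route as the paper: the lemma is stated there as a summary of the preceding discussion of the universal covering of $\C^2\backslash L$ and the homotopy argument identifying a once-around loop in a transverse real plane $\Pi$ with the generator of the monodromy of $\log(az_1+bz_2)$, and you reduce to exactly that statement. Your comparison of $F_L\circ\iota$ with a continuous logarithm $G$ of $h$ on the closed interval $[0,2\pi]$ is a clean way to make the existence of the two one-sided limits explicit, which the paper leaves implicit.
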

	In more invariant terms, a general loop in $\C^2\backslash L$
lifts as a loop to $Z$ if and only if it is not linked with $L$, but we will not need this.
		
	
	\begin{remark}\rm
	{\bf a)}
	The constructions in this and the next section rely on the initial choice of a pair $(L,\Pi)$
	of a complex line $L$ and a real $2$-dimensional subspace $\Pi$ transverse to $L$.
	These pairs are the points of a $6$-dimensional manifold $\mathcal{P}$.
	We claim that $GL_2(\C)$ acts transitively on $\mathcal{P}$, 
	meaning that all initial choices are equivalent. 
	Given $(L,\Pi)$, $j=1,2$, it is easy to reduce to the case that 
	$\Pi_1=\Pi_2=\R^2_x$ and 
	$L_j={\C\left(\!\!\begin{array}{c}1\\a_j+ib_j\end{array}\!\!\right)}$, $b_j\not=0$.
	We conclude by observing that (i) the matrices in $GL_2(\R)$ fix $\R^2_x$
	and (ii) $L_1$ can always be mapped to $L_2$ by a real invertible matrices of the form 
	$\left(\!\!\begin{array}{cc}1 & 0\\c&d\end{array}\!\!\right)$. 
	
	{\bf b)}
	The larger manifold  $\mathcal{Q}$ of all pairs $(\Pi_1,\Pi_2)$ where the $\Pi_j$ 
	are transverse real $2$-dimensional subspaces of $\C^2$ is $8$-dimensional.
	By dimensional reasons 
	($GL_2(\C)$ has real dimension $8$, but real multiples act the same way),
	the action of $GL_2(\C)$ is not transitive. 
	Alternatively, one may also observe that pairs of the form $(\Pi,i\Pi)$
	form a $4$-dimensional orbit or use
	\cite{W}, where it is shown that $\Pi_1\cup\Pi_2$ 
	is sometimes locally polynomially convex and sometimes not.
	\end{remark}
	
	{\bf Step 2: A first option for $D_2$.}
	In the above setting we can take $\Pi=\R^2_x$ 
	and fix a complex transverse line $L$ accordingly (for example $L=\{z_1=iz_2\}$).
	Set $A=\{(x_1,x_2)\in\R^2:1<|x|<2\}$, and define
	\begin{equation}\label{R}
	R=\max\{r>0:L\cap(A+iB_{r})=\emptyset\}.
	\end{equation}
	To show that $R$ is well defined, we easily verify that
	$\{r>0:L\cap(A+iB_{r})\not=\emptyset\}$ is a nonempty 
	interval that is either of the form $(0,r_0]$ or $(0,\infty)$.
	The last option is impossible, since any real plane through the origin
	different from the $i\R^2$ must meet $A+iB_{r}$, 
	provided $r$ is large enough.
	
	Working in $\R^2$, we observe that the circle $\{|x-(3/2,-2)|=2\}$
	intersects the closure of $A$ in two connected arcs.
	Denoting the component passing through $p=(3/2,0)$ by $\gamma_0$,
	we set $X=A\backslash\gamma_0$.
	
	Pick $r\in(0,R]$. We claim that $\textsf{E}(X\times iB_r)$ is at least $2$-sheeted over the points 
	below the hypersurface $H_0=\gamma_0+iB_r$ and close to $(3/2,0)$.
	To prove this, we observe that  $H_0$ is strictly pseudoconvex.
	By the Levi extension theorem, holomorphic functions locally defined on the upper side
	$\{|x-(3/2,-2)|>2\}$ extend across $H_0$ to a uniform neighbourhood of $(3/2,0)$. 
	We can choose a branch $f$ of $f_L$ on the simply connected domain $X+iB_r$.
	Our initial remarks on $f_L$ imply that the extension of $f$ 
	through $H_0$ differs by $\pm 2\pi i$ from $f$ on the side $|x-(3/2,-2)|<2$,
	and the claim follows. What we have shown so far is already enough to answer 
	Noguchi's question in the negative.

To finish the construction of $D_2$, we have to smoothen the boundary $X$.
This can for example be done by cutting out from $X$ the component of 
\[
\{2-\eta\leq|x-(3/2,-2)|\leq 2\}\cap X, \mbox{ where } 0<\eta\ll 1,
\]
containing $\gamma_0$ and rounding off the four corners.
Note that the choice of $\eta$ sensitively depends on $r$,
as will become clear from the discussion of the Levi extension theorem
before the proof of Lemma \ref{Ginf}.\\

{\bf Step 3: Construction of $D_\infty$.}
To produce envelopes with infinitely many sheets,
we will replace the previous annulus-type domain by a spiral-shaped subdomain of $\{|x|>1\}$ 
with the circle $S=\{|x|=1\}$ in its limit set. The mapping
\[
\varphi:[1/2,1]\times\R_{\geq 0}\rightarrow\R^2, (s,\theta)\mapsto(1+se^{-\theta})(\cos\theta,\sin\theta)
\]
is an embedding of the semi-infinite strip $[1/2,1]\times\R_{\geq 0}$ into $\{|x|>1\}$ (use that $e^{-2\pi}<1/2$). 
The domain $X_\infty=\varphi((1/2,1)\times\R_{>0})$ is squeezed between the straight segment
with endpoints $(3/2,0)$ and $(2,0)$ and the two spirals 
\[
\Gamma_{1/2}=\varphi(\{1/2\}\times\R_{\geq 0})\ \mbox{ and }\ \Gamma_{1}=\varphi(\{1\}\times\R_{\geq 0}).
\]

\vspace*{-0.55cm}
\begin{figure}[!htb]\label{fig1}
	\centering
		\def\svgwidth{\linewidth}
		\fontsize{12pt}{1em}
		\scalebox{.8}{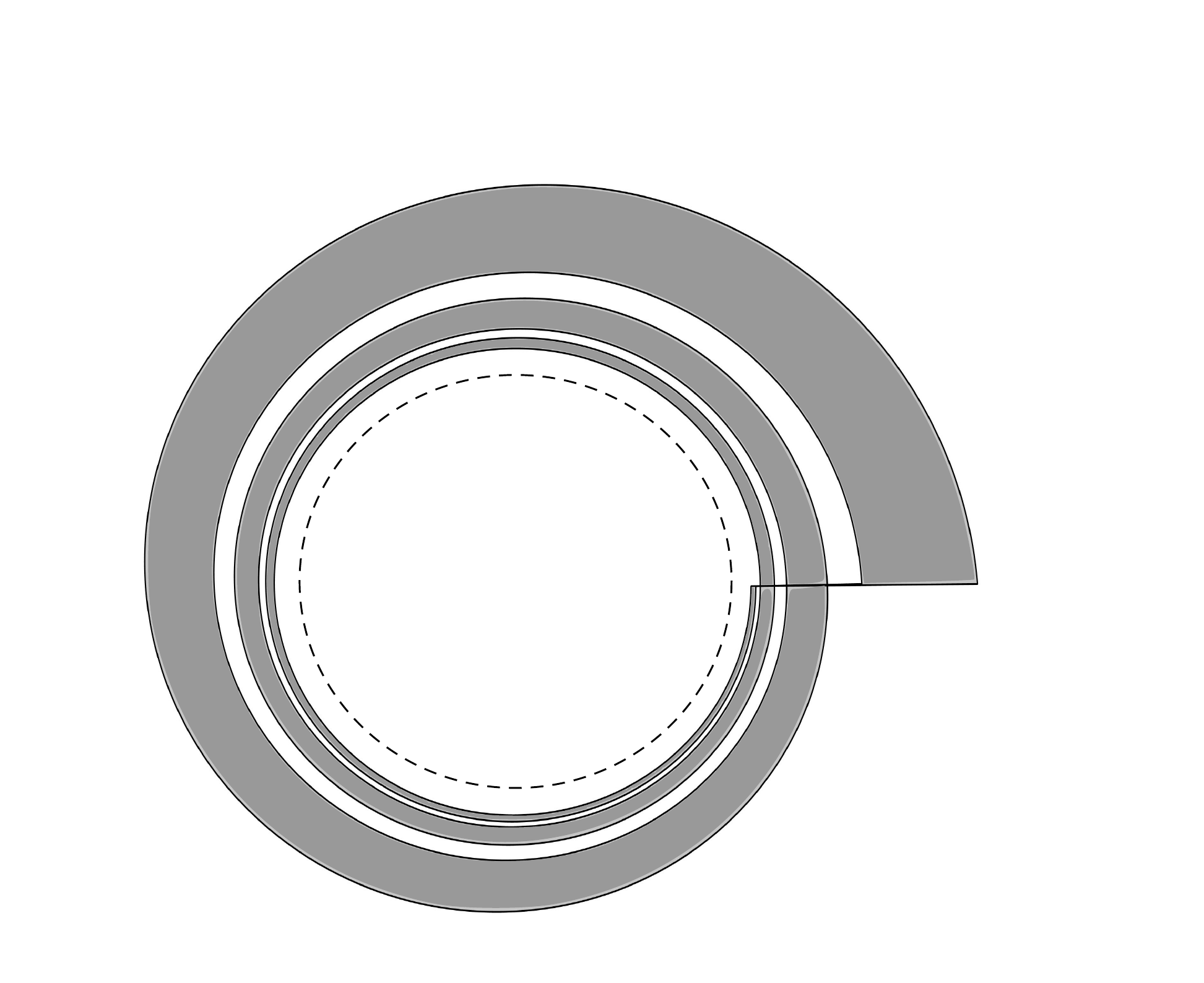}
		\vspace*{-0.4cm}
		\caption{Intersection of $D_{\infty}$ with $\R^2$}
	\end{figure}
Note that $S$ also belongs to $\overline{X}_\infty$, meaning in particular 
that the set theoretic boundary $\partial X_\infty$
is not smooth along $S$, see Figure 2.1. 
\begin{lemma}\label{Ginf}
Set $D_r=X_\infty+iB_{r}$ with $r\in(0,R)$. Then there is $\rho>0$ such that $\textsf{E}(D_r)$
contains a Riemann subdomain which has infinitely many sheets over every point of 
the thickened  annulus $A_{\rho}=\{1-\rho<|x|<1\}+iB_{\rho}$.
\end{lemma}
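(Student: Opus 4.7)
The plan is to produce, for every sufficiently large integer $k$, one sheet of $\textsf{E}(D_r)$ lying over a common neighbourhood of $A_\rho$, and to show via the monodromy of $\log(az_1+bz_2)$ that these sheets are mutually distinct.  First, the inner spiral hypersurface $\Gamma_{1/2}+iB_r$ is strictly pseudoconcave as seen from $D_r$: for a tube hypersurface $\gamma+iB_r\subset\C^2$ the Levi form on the one-dimensional complex tangent space is, up to a positive factor, the signed curvature of $\gamma$ taken with respect to the outward normal of the domain, and $\Gamma_{1/2}$ curves toward the centre $O$ while $X_\infty$ lies on the opposite side.  Consequently, the Levi/Kontinuit\"atssatz extension theorem furnishes, around every point of $\Gamma_{1/2}+iB_r$, a $\C^2$-neighbourhood to which every $f\in\CalO(D_r)$ extends holomorphically.

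Next I would make this reach quantitative.  As $k\to\infty$ the curvature of $\Gamma_{1/2}$ on turn $k$ (the image of $\{1/2\}\times[2\pi k,2\pi(k+1)]$ under $\varphi$) tends uniformly to $1$, so a standard attached-disk construction yields a lower bound $\delta>0$ for the radial inward depth of the extension, depending only on $r$ and on a uniform positive lower bound for the curvature.  Since the radial distance from $\Gamma_{1/2}$ at turn $k$ to the limit circle $S$ equals $\tfrac{1}{2}e^{-2\pi k}$, there is some $k_1$ such that, for every $k\ge k_1$, the extension neighbourhood $U^{(k)}\subset\C^2$ crosses $S$ and contains the full thickened annulus $A_\rho$ for $\rho:=\delta/2$.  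Each $U^{(k)}$ thus lifts to a single sheet of $\textsf{E}(D_r)$ above $A_\rho$.

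The monodromy argument gives distinctness.  On the simply connected $D_r$ fix a branch $F_L$ of $\log(az_1+bz_2)$.  Because $\R^2_x$ meets $L$ transversally at the origin, an arc in $X_\infty$ from turn $0$ to turn $k$ with fixed angular endpoint projects to a curve in $\C^2\setminus L$ winding $k$ times around $L$, and $F_L$ on turn $k$ differs from $F_L$ on turn $0$ at the corresponding angle by exactly $\pm 2\pi i k$.  Letting $\tilde F_L^{(k)}$ denote the Levi extension of $F_L|_{\text{turn }k}$ to $U^{(k)}$, for any $p\in A_\rho$ and distinct $k,k'\ge k_1$ one has $\tilde F_L^{(k)}(p)-\tilde F_L^{(k')}(p)=\pm 2\pi i(k-k')\ne 0$.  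Hence $U^{(k)}$ and $U^{(k')}$ are distinct sheets over $p$, and the union $G=D_r\cup\bigcup_{k\ge k_1}U^{(k)}$, taken inside $\textsf{E}(D_r)$, is a Riemann subdomain with infinitely many sheets over every point of $A_\rho$.

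The main obstacle is the uniform lower bound $\delta>0$ in the quantitative step.  One must exhibit analytic disks attached to $D_r$ that cross $\Gamma_{1/2}+iB_r$ with diameter bounded below uniformly in the angle and in the turn index $k\ge k_1$.  For $k$ large the piece of $\Gamma_{1/2}$ on turn $k$ is uniformly $\CalC^2$-close to an arc of the unit circle $S$, so the required disks can be constructed as small perturbations of the Hartogs figures attached to the tube $S+iB_r$; the reach of the resulting extension is stable under such perturbations and yields a uniform $\delta$, after which the three steps above combine to produce $G$ and complete the proof.
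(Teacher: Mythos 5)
Your strategy is the paper's: Levi extension through the pseudoconcave spiral tube $\Gamma_{1/2}+iB_r$, made uniform by the $\CalC^2$-convergence of the successive turns to the circle tube $S+iB_r$ (so that the reach of the extension is stable under small perturbations), followed by the $\pm2\pi i$ monodromy of a branch of $\log(az_1+bz_2)$ to separate the resulting points of $\textsf{E}(D_r)$. Both the quantitative step and the monodromy computation are in substance correct.

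There is, however, one step that fails as stated: the claim that each $U^{(k)}$, the region reached by extension from turn $k$, \emph{lifts to a single sheet of $\textsf{E}(D_r)$ above $A_\rho$}. A full turn already carries monodromy: the two ends of $\varphi(\{1/2\}\times[2\pi k,2\pi(k+1)])$ are distinct points of $X_\infty$ lying over essentially the same base point $(1,0)\in S$, and the local Levi extensions issued from neighbourhoods of these two ends produce germs of $F_L$ near $(1,0)$ that differ by $\pm2\pi i$. Hence the extension of a function from a single turn is \emph{not} single-valued on a full neighbourhood of $S$; the set $U^{(k)}\subset\C^2$ may well contain $A_\rho$, but it does not correspond to a schlicht sheet of the envelope, and your quantity $\tilde F_L^{(k)}(p)$ is ill-defined for $p$ in the overlap near the seam. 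The conclusion survives, but the bookkeeping must be done as in the paper: one extends from a continuously varying family of \emph{half-turn} windows $I_{\tau,j}$ of angular length $\pi<2\pi$ (so each individual extension is unambiguous), obtains a continuous curve of germs $\sigma\mapsto p(\sigma)\in\textsf{E}(D_r)$ over $S$, and the resulting Riemann subdomain is one connected helix --- the lift of $\{1-\rho<s<1\}\times\{\theta>C\}\times B_\rho$ --- which has infinitely many points over each point of $A_\rho$ even though it admits no decomposition into disjoint schlicht sheets over the whole annulus. Alternatively, keep your discrete turns but restrict each $U^{(k)}$ to a cut annulus and then glue the pieces along the seam with the appropriate shift; either way the object produced is the connected helical subdomain, not a disjoint union of sheets.
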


Before entering the proof of Lemma \ref{Ginf}, some comments are in order.\\

\noindent
{\bf (1)} Let us identify the universal covering of $A_{\rho}$ with 
\[
\{1-\rho<s<1\}\times\R_\theta\times B_{\rho}
\]
via the map $(s,\theta,y)\mapsto(s\cos\theta,s\sin\theta,y)$.
Then the proof below will show 
that the Riemann subdomain in the lemma can actually be chosen as the lifting of a domain of the form 
\begin{equation}\label{lift}
\{1-\rho<s<1\}\times\{\theta>C\}\times B_{\rho},
\end{equation}
with $C>0$ sufficiently large.\\

\noindent
{\bf (2)} 
Recall the Levi extension theorem:
\it 
Let $H$ be a connected closed real hypersurface of a domain $G\subset\C^2$ 
such that $G\backslash H$ has two connected components $G^\pm$. 
In addition, assume that the Levi form of $H$ is nondegenerate in every point 
and that $G^+$ is the pseudoconcave side of $H$. 
For $z_0\in H$, there is an $\eta>0$ with the following properties:
For every open set $V$ containing $H$, all functions holomorphic 
on $V^+=V\cap G^+$ extend holomorphically to 
$V^+\cup \big(B_\eta(z_0)\cap\overline{G^-}\,\big)$.\\

\rm
We need two additional stability properties of Levi extension:
{\bf (i)} Since it mainly depends on Levi curvature, it is stable under $\CalC^2$-small deformations of $H$.
More concretely, the $\eta$ above can be chosen uniformly for $H$ 
and all sufficiently $\CalC^2$-close deformations fixing $z_0$.
{\bf (ii)} The constant $\eta$ does not depend on the size of $G^+$. 
If $G$ is replaced by a smaller domain $G_1$ with analogous properties, 
we still obtain holomorphic extension to $G_1^+\cup\big(B_\eta(z_0)\cap\overline{G^-}\,\big)$.
These properties can be shown by the method of analytic discs, see \cite{MP1}.\\

{\bf Proof of Lemma \ref{Ginf}:}
The spiral $\Gamma_{1/2}$ accumulates smoothly to $S$.
More precisely, for any interval $(\theta_1,\theta_2)\subset\R_{>0}$ which is \emph{short} 
in the sense that $\theta_2-\theta_1<2\pi$, the parametrisations
\[
\varphi_j(\theta)=\varphi(1/2,\theta+2\pi j),\ \theta_1<\theta<\theta_2,\ j=0,1,\ldots,
\]
tend to $(\cos\theta,\sin\theta)$ on $(\theta_1,\theta_2)$ with all their derivatives for $j\rightarrow\infty$.

Fix now $r<R$ like indicated in the assumptions. For $\tau\in[\pi/2,5\pi/2)$, 
we look at the short interval
$I_{\tau}=(\tau-\pi/2,\tau+\pi/2)$, 
the corresponding halfcircle $S_{\tau}\subset S$
centered at $z_{\tau}=(\cos\tau,\sin\tau)$,
and the strictly pseudoconvex hypersurface $H_{\tau}=S_{\tau}+iB_r$.
For $j=0,1,2,\ldots$, we define the translated intervals 
\[
I_{\tau,j}=(\tau+\pi(2j-1/2),\tau+\pi(2j+1/2))
\]
and the curved rectangles
\[
Q_{\tau,j}=\varphi((1/2,1)\times I_{\tau,j}))+iB_r.
\]
The corresponding tubes
$Q_{\tau,j}+iB_r$
have a pseudoconcave boundary part
\[
H_{\tau,j}=\varphi(\{1/2\}\times I_{\tau,j}))+iB_r,
\]
which converges in $\CalC^2$-sense toward $H_\tau$ for $j\rightarrow\infty$.


Set $z_{\tau,j}=\varphi(1/2,\tau+2\pi j)$.
For $\eta>0$ not to large ($\eta<1/4$ will do), 
the disc $B_{3\eta/2}(z_{\tau,j})$ is disconnected by the arc
$\gamma_{\tau,j}=\varphi(\{1/2\}\times I_{\tau,j})$
into two connected components $B^\pm_{3\eta/2}(z_{\tau,j})$.
We choose signs such that $B^-_{3\eta/2}(z_{\tau,j})$ 
lies on the pseudoconvex side of $H_{\tau,j}$. We now define the enlarged domain
\[
Q_{\tau,j}(\eta)=Q_{\tau,j}\cup\Big(\big(\gamma_{\tau,j}\cap B_{3\eta/2}(z_{\tau,j})\big)+iB_\eta \Big)
\cup \Big(B^-_{3\eta/2}(z_{\tau,j})+iB_\eta\Big).
\]
The stability properties of Levi extension discussed above yield an $\eta=\eta(\tau)>0$
such that for $j$ sufficiently large, say $j\geq J=J(\tau)$, the following holds:
Any function holomorphic on $Q_{\tau,j}$ extends to $Q_{\tau,j}(\eta)$.
Enlarging $J$ if necessary, we may in addition assume that the domain
$Q_{\tau,j}(\eta)$, $j\geq J$, contains 
\[
\B^-_\eta(\cos\tau,\sin\tau)=\B_\eta(\cos\tau,\sin\tau)\cap (B_1+i\R^2),
\]
where $\B_\eta(z)$ denotes the round ball in $\C^2$.
By compactness, we can choose $J$ and $\eta$ 
uniformly for $\tau\in[\pi/2,5\pi/2]$. 

Relabelling the domains $Q_{\tau,j}$ as $Q_\sigma$ if $\sigma=\tau+2\pi j$,
we define $Q_\sigma$ for any $\sigma\geq\pi/2$.
Then we obtain for every $f\in\CalO(D_\infty)$ 
a $\sigma$-dependent family of  germs 
$f_\sigma\in\CalO_{\C^2,(\cos\sigma,\sin\sigma)}$\footnote{
$\CalO_{\C^2,(z)}$ denotes the stalk of holomorphic germs at the point $z\in\C^2$.},
$\sigma\geq 2J\pi+\pi/2$, by 
\begin{itemize}
\item[\bf (i)\phantom{ii}] restricting $f$ to $f|_{Q_\sigma}$, 
\item[\bf (ii)\phantom{i}] extending $f|_{Q_\sigma}$ to $Q_\sigma(\eta)$, and
\item[\bf (iii)] restricting to the germ of the extension at $(\cos\sigma,\sin\sigma)\in Q_\sigma(\eta)$.
\end{itemize}
This yields a continuously parametrised curve 
$\sigma\mapsto p(\sigma)\in\textsf{E}(D_\infty)$, $\sigma\geq 2\pi J+\pi/2$,  over $S$. 

Next we specialise to a branch $\tilde{f}_L$ of $f_L=\log(az_1+bz_2)$ on $D_\infty$,
which exists since $D_\infty$ is simply connected.
The above construction also gives a continuous curve
$\sigma\mapsto q(\sigma)\in Z$ over $S$, $\sigma\geq 2\pi J+\pi/2$, 
by mapping the germ of the extension at $(\cos\sigma,\sin\sigma)$ 
to the corresponding point in the Riemann domain of $\tilde{f}_L$.
With every turn, $q(\sigma)$ changes sheet, and therefore the values
of two consecutive extensions at $(\cos\sigma,\sin\sigma)$ differ by $\pm 2\pi i$.
It follows that $p(\sigma)$ also changes sheet with every turn,
and
Lemma \ref{Ginf} is proved. \qed\\

{\bf Step 4: Construction of $D_k$.}
With $J$ and $\eta$ as chosen in the proof of Lemma \ref{Ginf}, 
we consider the truncated spiral
\[
X'_k=\varphi((1/2,1)\times(0,2\pi J+2(k+1)\pi)). 
\]
Using the arguments above we see that $\textsf{E}(X'_k\times iB_r)$ 
has the required extension properties. 
Smoothness can be obtained by rounding off the corners by moving the boundary outside of $X'_k$,
which results in a slightly larger domain $X_k$. 
The construction of $X_k$ and the proof of Theorem \ref{main} are complete. \qed

\begin{remark}\rm
{\bf{(a)}} Since the family of curved rectangles depends continuously on the central angle,
we obtain a subset of $\textsf{E}(D_\infty)$ as parametrised in (\ref{lift}).
A careful examination of the proof of Lemma \ref{Ginf} shows that we can take $C=2J\pi+\pi/2$, $\rho=\eta/\sqrt{2}$
in (1).

{\bf{(b)}} A straightforward modification of the above proof, relying on the properties of Levi extension,
shows that we could have started from an \emph{arbitrary} tubular thickening
of the central spiral $\varphi(\{3/4\}\times\R_{>0})$. The resulting slight simplification will be used in the next section.
\end{remark}

\section{Universally covered circles}
Let $D\subset\C^n$ be a domain and $\gamma\subset\C^n$ an embedded loop 
(a curve homeomorphic to the circle)
with inclusion $\iota_\gamma:\gamma\hookrightarrow\C^n$.
A connected component $\gamma'$ of $\pi_D^{-1}(\gamma)$ is either {\bf (a)} a loop or {\bf (b)} an arc.
In case {\bf (a)}, $\pi_D|_{\gamma'}:\gamma'\rightarrow\gamma$ is a covering 
of $\gamma$ by $\gamma'$, whose topology is determined by the number 
$k$ of sheets over an arbitrary point of $\gamma$ (here $k$ is a positive integer).
Case {\bf (b)} splits again: 
{\bf (b$_1$)} 
$\pi_D|_{\gamma'}:\gamma'\rightarrow\gamma$ may be the universal covering of $\gamma$, i.e.~$k=\infty$.
{\bf (b$_2$)} 
Otherwise, $\pi_D|_{\gamma'}$ is not a covering, 
and the closure of $\gamma'$ in the abstract closure of $\textsf{E}(D)$ 
contains at least one point in the abstract boundary.

If the case {\bf (b$_1$)} is valid, we say also 
that $\gamma'$ \emph{universally covers $\gamma$} 
(and that $\gamma$ is \emph{universally covered by $\textsf{E}(D)$}).
Intuitively, this means that $\textsf{E}(D)$ becomes as multi-sheeted as possible above $\gamma$.
Note that the definition may be extended to higher-dimensional
submanifolds of $\C^n$ instead of $\gamma$.

The goal of this section is to strengthen the construction of $D_\infty$ 
in the proof of Theorem \ref{main}. 

\begin{proposition}
Assume that $\Pi=\R^2$ and $L$ are selected as in the construction of $X_2$ 
and that $R$ is defined as in (\ref{R}).
For every $r\in(0,R)$, there is a bounded domain $X\subset\R^2$ such that the envelope of holomorphy of
$D=X+iB_r$ universally covers  a circle $T\subset\R^2\backslash X$.
\end{proposition}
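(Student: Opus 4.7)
\textit{Proof plan.} The strategy is to modify the construction of Lemma~\ref{Ginf} by using a domain $X$ whose boundary spirals toward the accumulation circle in \emph{both} rotational directions; the two one-sided helices produced by Lemma~\ref{Ginf}-type arguments on the two tails will then glue into a bi-infinite lift of the circle, which is its universal cover. Concretely, I fix $r\in(0,R)$, take $T=S=\{|x|=1\}$, and choose a smooth $g\colon\R\to(0,1)$ decaying sufficiently rapidly to $0$ as $|\theta|\to\infty$, and asymmetrically between positive and negative $\theta$, that the curve $\Gamma(\theta)=(1+\tfrac{3}{4}g(\theta))(\cos\theta,\sin\theta)$, $\theta\in\R$, is an embedded smooth spiral in $\{|x|>1\}$ accumulating to $S$ as $\theta\to\pm\infty$. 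I let $X$ be a thin smooth tubular neighbourhood of $\Gamma$ in $\R^2$, with thickness decaying fast enough at the tails to keep $X\cap S=\emptyset$. This produces a bounded, simply connected, smoothly bounded domain $X$ with $T\subset\R^2\setminus X$.

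By the final remark of Section~\ref{sec2} (allowing arbitrary tubular thickenings of a central spiral), the argument of Lemma~\ref{Ginf} applies verbatim to each of the two tails of $\Gamma$ in $X$. It furnishes continuous curves $p_+\colon[C,\infty)\to\textsf{E}(D)$ and $p_-\colon(-\infty,-C]\to\textsf{E}(D)$ over $S$ in $D=X+iB_r$, where consecutive turns shift the single-valued branch $f_L=\log(az_1+bz_2)$ on the simply connected $D$ by $\pm 2\pi i$. Let $\gamma'$ denote the connected component of $\pi_D^{-1}(S)\subset\textsf{E}(D)$ containing $p_+([C,\infty))$. Since $f_L$ extends single-valuedly to $\textsf{E}(D)$, no two distinct points of $\gamma'$ can share an $f_L$-value, so $\gamma'$ is an arc, not a loop; case (a) of the trichotomy opening Section~3 is excluded. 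The $\sigma\to+\infty$ end of $\gamma'$ does not terminate at an abstract boundary point, because $p_+$ furnishes fresh sheets indefinitely. I would then argue that backward continuation of $p_+$ stays inside $\textsf{E}(D)$ and reaches $p_-$: Levi extension applied to the $\theta\to-\infty$ tail supplies $p_-$, and the thickened central region of $X$ bridges the two families of sheets via the chain of small balls $\B^-_\eta$ constructed in the proof of Lemma~\ref{Ginf}. Hence $\gamma'\supset p_-\cup p_+$ is homeomorphic to $\R$, $\pi_D$ is a local homeomorphism on $\gamma'$, and each point of $S$ is covered countably infinitely many times; that is, $\gamma'$ is the universal cover of $S$.

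The principal obstacle is this last gluing step: verifying that the backward extension of $p_+$ really meets $p_-$ in $\textsf{E}(D)$, rather than first terminating at an abstract boundary point (case (b$_2$)). The bi-infinite structure of $\Gamma$ is introduced precisely to supply Levi extension in both rotational directions and a geometric bridge between the two families of sheets within $\textsf{E}(D)$; a one-sided spiral as in Lemma~\ref{Ginf} cannot prevent the backward arc from running out of room. I expect the verification to follow from a careful continuation of the germ $p_+(C)$ through the central region of $X$, matched to germs coming from $p_-$ via the uniqueness of holomorphic extension on the simply connected $D$.
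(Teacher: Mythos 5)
There is a genuine gap, and it occurs already at the construction of $X$: the proposed curve $\Gamma(\theta)=(1+\tfrac34 g(\theta))(\cos\theta,\sin\theta)$ cannot be embedded for \emph{any} continuous $g:\R\to(0,1)$ with $g\to 0$ at both ends. Indeed, $g$ attains its maximum at some $\theta_0$; for $k\geq 1$ the function $h_k(\theta)=g(\theta+2\pi k)-g(\theta)$ satisfies $h_k(\theta_0-2\pi k)\geq 0$ and $h_k(\theta_0)\leq 0$, so by the intermediate value theorem there is $\theta^*$ with $g(\theta^*)=g(\theta^*+2\pi k)$, i.e.\ $\Gamma(\theta^*)=\Gamma(\theta^*+2\pi k)$. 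No choice of ``asymmetric decay'' avoids this, since the argument uses only the radial profile at angles differing by multiples of $2\pi$. The paper's construction sidesteps exactly this obstruction: the spiral $\psi_\epsilon$ has \emph{strictly monotone} radial coordinate $1+\epsilon(\tfrac12+\tfrac{\arctan\theta}{\pi})$, lives in the thin annulus $\{1<|x|<1+\epsilon\}$, and accumulates to the inner circle $T$ as $\theta\to-\infty$ and to the outer circle $\{|x|=1+\epsilon\}$ as $\theta\to+\infty$ --- not to the same circle from outside at both ends.

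Even if you could repair the embedding, the step you yourself flag as the principal obstacle --- gluing $p_+$ and $p_-$ through the central region --- would fail. In your central region the spiral sits at radius up to roughly $7/4$, so the arcs $I(\theta)+iB_r$ there are at distance of order $1$ from $T$, far beyond the uniform reach $\eta$ of Levi extension; worse, as the paper's closing remark of Section 3 explains, when the hypersurface $I(\theta)+iB_r$ is too far from $p_\theta$ the parallel complex line $L_{p_\theta}$ misses it entirely and extension to a neighbourhood of $p_\theta$ is \emph{excluded}, not merely unproved. Hence there is no continuous family of germs over the central arc of $T$, and the component of $\pi_D^{-1}(T)$ containing $p_+$ has no reason to contain $p_-$; it may well terminate at an abstract boundary point (your case (b$_2$)). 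The paper's proof avoids any gluing: because the entire spiral stays within $\epsilon$ of $T$, with $\epsilon$ chosen small in terms of $r$, the extension to the uniform set $L_\epsilon(\theta)\ni p_\theta$ holds for \emph{all} $\theta\in\R$ at once, producing directly a single bi-infinite continuous lift of $T$ that changes sheets of $\log(az_1+bz_2)$ with every turn.
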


\begin{proof}
Since the idea is close to the proof of Theorem \ref{main}, we will only sketch the construction.
We choose $\Pi=\R^2$, $L$ and $R$ as in the construction of $X_2$.
Instead of working with a spiral $S_\epsilon$ accumulating from outside to $\{|x|=1\}$, 
we start from a spiral contained in a thin annulus 
\[
A_\epsilon=\{1<|x|<1+\epsilon\}\subset\R^2
\] 
and accumulating at both of its boundary circles
($\epsilon$ will later be chosen later).
Concretely, $S_\epsilon$ can be taken as the curve parametrised by
\[
\psi_\epsilon:\theta\mapsto\left(1+\epsilon\left(\frac{1}{2}+\frac{\arctan{\theta}}{\pi}\right)\right)(\cos\theta,\sin\theta),\ \theta\in\R.
\]

This spiral accumulates to $T=\{|x|=1\}$ for $\theta\rightarrow -\infty$ and to $\{|x|=1+\epsilon\}$ for $\theta\rightarrow\infty$.
The domain $X$ (which also depends on $\epsilon$) is now chosen as a tubular neighbourhood\footnote{
	i.e.~a domain $X\subset A_\epsilon$ containing $S_\epsilon$ and admitting a homeomorphism $S_\epsilon\times (-1,1)\rightarrow X$
	which restricts to $(x,0)\mapsto x$ 
	along $S_\epsilon\times\{0\}$.}
of $S_\epsilon$ in $A_\epsilon$. Its precise shape is not important, but
we can arrange that $X$ is smoothly bounded as a domain in $A_{\epsilon}$ (but not in $\R^2$).
\vspace*{-0.55cm}
\label{universal}
	\begin{figure}[h]
		\def\svgwidth{\linewidth}
		\fontsize{14pt}{1em} 
		\scalebox{.6}{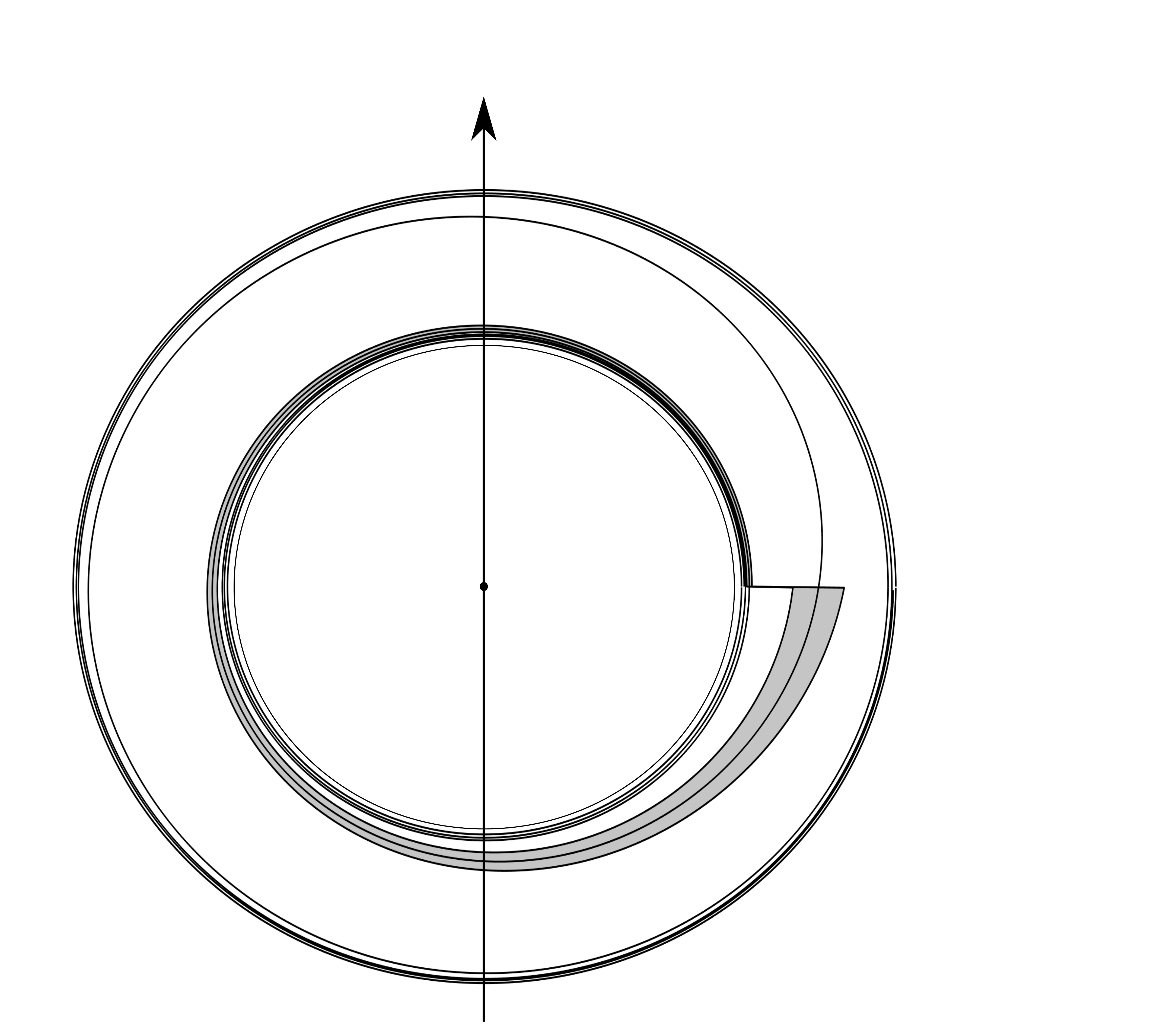}
			\vspace*{-0.4cm}
		\caption{Intersection of $D$ with $\R^2$}
	\end{figure}
Fix now $r\in(0,R)$. 
Similarly as before, we look at segments 
\[
I_\epsilon(\theta)=\psi_\epsilon(\theta-\pi/2,\theta+\pi/2)\subset S_\epsilon,\ \theta\in\R,
\] 
which correspond to a change of angle by $\pi$.
The key observation is that a sufficiently small choice of $\epsilon$ guarantees that for \emph{all} $\theta\in\R$,
holomorphic functions defined near  $I_\epsilon(\theta)+iB_r$ extend to a uniform domain
containing the point $p_\theta=(\cos\theta,\sin\theta)$ on the unit circle $T\subset\R^2$. 
More precisely, we may even arrange that this domain contains 
$L_\epsilon(\theta)=\big(I_\epsilon(\theta)\cup B^-_{2\epsilon}(\psi(\theta))\big)+iB_\epsilon$,
where $B^-_{2\epsilon}(\psi(\theta))$ is the connected component of 
$B^-_{2\epsilon}(\psi(\theta))\backslash I_\epsilon(\theta)$ containing $p_\theta$.

We conclude similarly as before. Looking at a univalent branch on $X+iB_r$ of the logarithm $f_L$ "around" $L$,
we see that every turn along $S_\epsilon$ brings us to a new sheet 
in the envelope of $D=X+iB_r$.
Formally, this can again be expressed by a mapping which associates to every $\theta\in\R$
the germ at $(\cos\theta,\sin\theta)$ of the holomorphic extension of the function restricted to a
neighbourhood of $I_\epsilon(\theta)+iB_r$.
Since the sets $L_\epsilon(\theta)$ move continuously with $\theta$,
$T$ is universally covered by $\textsf{E}(D)$. 
\end{proof}

It is crucial for the above construction that the thickness $\epsilon$ of the annulus decreases with $r$. 
To explain the reason, let us keep $X$ as in the proof but replace $D$ by $D'=X+iB_{r'}$ with $r'>0$ arbitrary. 
If $r'$ is small enough, the lines $L_x$ through the points $x\in T$ and parallel to $L$ are disjoint from 
\[
\{z\in\C^2:|x|=1+\epsilon,|y|\leq r'\}.
\]
Hence for $\theta$ sufficiently large, the hypersurface $M=I_\epsilon(\theta)+iB_{r'}$ does not intersect 
$L_{(\cos\theta,\sin\theta)}$. 
This excludes holomorphic extension from $M$ to a neighbourhood of $(\cos\theta,\sin\theta)$,
and the main argument of the proof breaks down.


\section{Schlichtness in dimension $2$}
In the above examples, multi-sheetedness was derived from Levi extension through pseudoconcave parts of the boundary of $D$.
Moreover, there were always pseudoconvex parts of the boundary over which multi-sheetedness was observed. 
The goal of the section is to provide some evidence showing that this interplay is \emph{necessary} for multi-sheetedness in dimension $2$.

Let $C$ be a smoothly embedded circle in $\R^2$. Picking a parametrisation of $C$ 
by a smooth, $2\pi$-periodic immersion $\gamma:\R\rightarrow\R^2$ the increment of
the angle may be viewed as a $2\pi$-periodic $1$-form $\alpha(\theta)\,d\theta$. 
We call $C$ \emph{strictly convex} if $\alpha(\theta)$ does not vanish
(i.e.~there are no inflection points). 
It is elementary to verify that this notion is well-defined and that the bounded domain $X_{C,\sf in}$
surrounded by a strictly convex circle $C$ is convex. 
By a \emph{convex domain with finitely many strictly convex holes} we mean a domain $X\subset\R^2$
obtained from a convex domain $X_{\sf cvx}$ by excision of finitely many disjoint closed discs of the form
$\overline{X}_{C_j,\sf in}$ where the $C_j$ are strictly convex circles.
We will keep these notations in the following arguments.\\

\noindent
{\bf Proof of Theorem \ref{schlicht2d}:} 
The essence of the proof is contained in the treatment of a special case,
which may be viewed as an appropriate deformation of the model domains (\ref{model}):
Assume that $X$ is bounded by two strictly convex circles $C_0,C_1$, 
which we number so that $X$ is strictly concave along $C_1$,
and set $D=X+iY$, with $Y$ as in the theorem.

For $K\subset\C^n$ compact, we recall that
\[
\widehat{K}=\{z\in\C^n:|p(z)|\leq\max_{K}|p| \mbox{ for each holomorphic polynomial } p\}
\] 
denotes its polynomial hull. We obtain the following description of $\textsf{E}(D)$ 
in terms of polynomial hulls. 

\begin{lemma}\label{polhull}
Let $K_x$ be the closure of the convex domain bounded by $C_1$ and $K=K_x+i\partial Y$.
Then $\textsf{E}(D)=D'\backslash\widehat{K}$, where $D'=(X\cup K_x)+iY$.
In particular, $\widehat{K}$ does not disconnect $D'$.
\end{lemma}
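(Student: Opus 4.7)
The strategy is to prove both inclusions $\textsf{E}(D)\subseteq D'\backslash\widehat{K}$ and $\textsf{E}(D)\supseteq D'\backslash\widehat{K}$, after which the in-particular statement follows from the connectedness of envelopes of holomorphy. Preliminary bookkeeping: since $X\cup K_x$ and $Y$ are convex in $\R^2$, the set $D'$ is convex in $\C^2$ and hence polynomially convex, giving $\widehat{K}\subset\overline{D'}$. Moreover $D\subset D'\backslash\widehat{K}$: for $z_0=x_0+iy_0\in D$, convexity of $K_x$ together with $x_0\notin K_x$ supplies an $\R$-affine function $\ell$ on $\R^2$ with $\ell\leq -1$ on $K_x$ and $\ell(x_0)\geq 1$; lifting $\ell$ to a $\C$-linear $L$ with real coefficients so that $\mathrm{Re}\,L=\ell$ on the real slice, the entire function $\exp L$ satisfies $|\exp L|\leq e^{-1}$ on $K$ and $|\exp L(z_0)|\geq e$, and Oka--Weil polynomial approximation converts this into a polynomial separating $z_0$ from $K$.

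\textbf{Extension.} For $p\in D'\backslash\widehat{K}$ I plan to show $p\in\textsf{E}(D)$ via the Kontinuit\"atssatz, by constructing a continuous family of analytic discs $\varphi_t:\overline{\D}\to D'$ with $\varphi_t(\partial\D)\subset D$, $\varphi_0(\overline{\D})\subset D$, and $p\in\varphi_1(\D)$. Two geometric features drive this. First, $p\notin\widehat{K}$ yields a polynomial $P$ with $\max_K|P|<|P(p)|$, so the complex level curve $V=\{P=P(p)\}$ passes through $p$ and avoids $K$. Second, strict convexity of $C_1$ renders $C_1+iY\subset\partial D$ strictly Levi-pseudoconcave from the side of the hole $L=K_x+iY$, so the Levi extension machinery already used in Lemma \ref{Ginf} permits one to push analytic discs from $D$ across $C_1+iY$ into $L$. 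Combining these ingredients, one assembles a disc family sweeping from $D$ up through $L$ to $p$ along (or near) $V$, with boundaries remaining in $D$; Cauchy's formula then transports each $f\in\CalO(D)$ to a germ at $p$. This disc-filling step is the main technical obstacle; in the background sits the classical structure theory of polynomial hulls in $\C^2$ (Stolzenberg, Alexander), which says that $\widehat{K}\backslash K$ is essentially foliated by analytic varieties, ensuring the required discs exist.

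\textbf{Maximality.} Write $\C^2\backslash(D'\backslash\widehat{K})=(\C^2\backslash D')\cup(\widehat{K}\cap D')$. At $p\in\partial D'$, convexity of $D'$ supplies a peak function in $\CalO(D')$ not extending across $p$; its restriction to $D$ is the required obstruction, so $\textsf{E}(D)\subseteq D'$. At $p\in\widehat{K}\cap D'$, I invoke the local maximum modulus principle for polynomial hulls: any $g$ holomorphic near $K\cup\{p\}$ satisfies $|g(p)|\leq\max_K|g|$. A hypothetical extension of some $f\in\CalO(D)$ to $p$ would, since $f$ is already defined on approach to $K$ from the $D$-side (points of $K$ are limits of $(x,y)\in D$ with $x\in X$ tending to $C_1$), violate this bound once $f$ is chosen as $\sum_{n\geq 0}P^n$ for a polynomial $P$ with $\sup_K|P|<1\leq|P(p)|$. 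Finally, schlichtness of $\textsf{E}(D)$ is automatic: the extension constructed in the previous paragraph already lives single-valuedly in the schlicht set $D'\backslash\widehat{K}$, and the universal property of the envelope forces equality $\textsf{E}(D)=D'\backslash\widehat{K}$, whence the connectedness of $\textsf{E}(D)$ yields the in-particular assertion that $\widehat{K}$ does not disconnect $D'$.
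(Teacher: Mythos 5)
Your overall skeleton (extend to $D'\backslash\widehat{K}$, then argue maximality) matches the paper's, but the core extension step contains a genuine gap, and the justification you offer for it is wrong. You defer the disc-filling to ``the classical structure theory of polynomial hulls in $\C^2$ (Stolzenberg, Alexander), which says that $\widehat{K}\backslash K$ is essentially foliated by analytic varieties.'' This is backwards: Stolzenberg's famous example is a polynomial hull containing \emph{no} analytic structure, and the Wermer--Alexander results you have in mind concern hulls of (rectifiable) \emph{curves}, whereas $K=K_x+i\partial Y$ is a three-real-dimensional solid torus to which they do not apply. Moreover, even granting analytic structure, your Kontinuit\"atssatz family is not constructed: the level variety $V=\{P=P(p)\}$ avoids $K$ but nothing forces $V\cap D'$ to have boundary in $D$, nor do you explain how the boundaries of the swept discs stay in $D$ while their interiors cross $C_1+iY$ and traverse the hole. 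The paper avoids all of this by a completely different mechanism: it takes a nonnegative function $\varphi$ vanishing exactly on $\widehat{K}$ and strictly plurisubharmonic off it, arranges $\varphi$ and $\varphi|_M$ (for a strictly pseudoconvex deformation $M$ of $C_1+iY$ pushed into $D$) to be good Morse functions, and extends by a continuity sweep $h\downarrow 0$ over the domains $Q_h=D_{\sf out}\cup\{\varphi>h\}$, gluing local Levi extensions across the strictly pseudoconvex level sets $\{\varphi=h\}$ and handling Morse critical points as in Merker--Porten. That route needs no analytic structure in the hull and controls monodromy, which is exactly what your disc sweep leaves open.

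Your maximality paragraph also diverges from what is actually needed. Showing that individual points of $\partial D'$ and of $\widehat{K}\cap D'$ are ``not in the envelope'' does not by itself preclude $\textsf{E}(D)$ from spreading multi-sheetedly over $D'\backslash\widehat{K}$; the clean and necessary statement is that $D'\backslash\widehat{K}$ is \emph{pseudoconvex}, which the paper obtains from convexity of $D'$ together with Slodkowski's local pseudoconcavity of $\widehat{K}$ along the essential hull $\widehat{K}\backslash K$. In addition, your concrete obstruction $f=\sum_{n\geq 0}P^n$ need not lie in $\CalO(D)$: the series converges only where $|P|<1$, and $1/(1-P)$ may be singular inside $D$. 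The preliminary separation argument showing $D\subset D'\backslash\widehat{K}$ is fine, but as written the proposal does not prove the lemma.
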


The lemma may be viewed as a variant of a classical result by Stout \cite{Sto}
on extension from parts of strictly pseudoconvex boundaries. 
For the reader's convenience, we sketch an independent proof
along the lines of \cite{MP2}, see also \cite{LP} for a related application in dimension $2$.\\

\noindent
{\bf Proof of Lemma \ref{polhull}:} 
There is a nonnegative smooth function $\varphi\in\CalC^\infty(\C^2)$
that vanishes precisely on $\widehat{K}$ and is strictly plurisubharmonic on the complement.
Since $\widehat{K}\subset K_x+i\overline{Y}$, we have $\varphi>0$ on $D$.
After an appropriate deformation of $\varphi$, we can in addition assume 
that $\varphi|_{\C^2\backslash\widehat{K}}$ is a good Morse function\footnote{
i.e.~a function whose critical points are isolated nondegenerate quadratic singularities lying on different level sets.}.
Next we deform $C_1\times i\R^2$ such that $C_1+i(\R^2\backslash Y)$ is fixed
and $C_1\times Y$ is moved into $D$.
Since $C_1\times Y$ is strictly pseudoconvex hypersurface, 
a $\CalC^2$-small deformation of this kind will yield 
a strictly pseudoconvex hypersurface $M\subset D$.
Moreover, $M\cup K$ bounds a domain $D_{\sf in}\subset D'$,
which is slightly larger than $\mbox{int}(K_x)+iY$. 
Then $M$ disconnects $D_{\sf in}$ into $D''$ and an outer domain $D_{\sf out}$.
Finally we may arrange in addition that $\varphi|_M$ is a good Morse function.

We use the methods from \cite{MP2} to extend (restrictions to $D_{\sf out}$  of) 
holomorphic functions on $D$ to the open sets 
\[
Q_{h}=D_{\sf out}\cup\{z\in D_{\sf in}:\varphi(z)>h\},\ h>0.
\]
We need the following three elementary properties: 
\begin{itemize}
\item[(i)] $Q_h=D_{\sf out}$ for $h\gg 0$,
\item[(ii)] $Q_{h_1}\subset Q_{h_2}$ if $h_1>h_2$,
\item[(iii)] $\bigcup_{h>0}Q_h=D'\backslash\widehat{K}$.
\end{itemize}

The idea is to construct extension by continuously letting decrease $h\downarrow 0$.
Assume that the extension to $Q_{h_0}$ is valid for some $h_0>0$.
If $h_0$ is neither a critical value of $\varphi$ nor of $\varphi|_M$,
it is not hard to construct extension to $Q_{h_1}$ for some $h_1<h_0$ 
by gluing finitely many local Levi extensions through the strictly pseudoconvex
hypersurface $\{\varphi=h_0\}$. 
The general case requires a careful study of the local geometry at Morse singularities.
For the details, we refer to \cite{MP2}. 
Note however that the constructions in \cite{MP2}, where extension from  \emph{non}pseudoconvex boundaries is considered,
have to take transitional multi-sheetedness into account.
As explained in \cite{LP}, these monodromy problems are easier to handle in our setting, thanks to the strict pseudoconvexity of $M$.

After constructing extension to $D'\backslash\widehat{K}$, the proof will be ready 
as soon as we have shown that $D'\backslash\widehat{K}$ is pseudoconvex.
This follows from the convexity of $D'$ and the known fact that the polynomial hull $\widehat{K}$ is pseudoconcave 
at points of the essential hull $\widehat{K}\backslash K$, see for example \cite{Slo}. 
\qed

\begin{remark}\label{n>=3}
\rm
Most of the above proof generalises without change to $n\geq 3$, 
giving univalent extension from $D$ to $D'\backslash\widehat{K}$.
At least for the general case of extension from parts of strictly pseudoconvex boundaries, 
it is however known that this need not give the entire envelope and that the envelope
may even be multi-sheeted. \qed
\end{remark}

Let us now consider the general case, where we denote by $C_1,\ldots,C_m$ 
the components of $\partial X$ along which $X$ is strictly concave.
For each $\mu=1,\ldots,m$, we select a similar curve $C_{\mu,1}$ obtained by smoothly deforming $C_\mu$ slightly into $X$.
%
%
%
By Lemma  \ref{polhull} applied to the domain $X_\mu$ squeezed between $C_\mu$ and $C_{\mu,1}$,
we have 
\[
\textsf{E}(X_\mu+iY)=X'_\mu\backslash\widehat{K_\mu}.
\]
Here $K_\mu=K_{x,\mu}+i\partial Y$ with $K_{x,\mu}$ being the closure of the convex domain bounded by $C_\mu$,
and $X'_\mu=(X_\mu\cup K_{x,\mu})+iY$.
By construction,
\[
\widehat{D}=D\cup\bigcup_{\mu=1}^m \textsf{E}(X_\mu+iY)
\]
embeds into the envelope of $D$.
To conclude the proof, it just remains to show that $\widehat{D}$ is pseudoconvex 
(this is the only place where we use the properties of the outer boundary component of $\partial X$)
and must therefore coincide with the envelope of holomorphy of $D$. 
Pseudoconvexity follows from the convexity of 
\[
D'=(X\cup\bigcup_{\mu=1}^m K_{x,\mu})+iY
\]
and the local pseudoconvexity along the essential hulls $\widehat{K_\mu}\backslash K_\mu$,
used already in the proof of Lemma \ref{polhull}.
The proof of Theorem \ref{schlicht2d} is complete.
\qed 

While the assumption on the strict convexity of the holes can probably be weakened,
we cannot completely omit an assumption on their shape.

\begin{example}\label{convexity}\rm
The following example is a variant of our first option of $D_2$ in Section \ref{sec2}.
Choose $\Pi=\R_x^2$ and a complex line $L=\{az_1+bz_2=0\}$ like there.
We construct $X'$ by adding to the annulus $A=\{(x_1,x_2)\in\R^2:2<|x|<3\}$
the bridge $\{|x_1|<1/2\}\cap B_3$ from which we cut away a sufficiently thin bent channel
as indicated in figure \ref{ncvx}
(the width of the channel corresponding to $\eta$ in the construction of $D_2$).
 
\begin{figure}[htbp]
	\centering
	\def\svgwidth{\linewidth}
		\fontsize{14pt}{1em}
	\scalebox{.6}{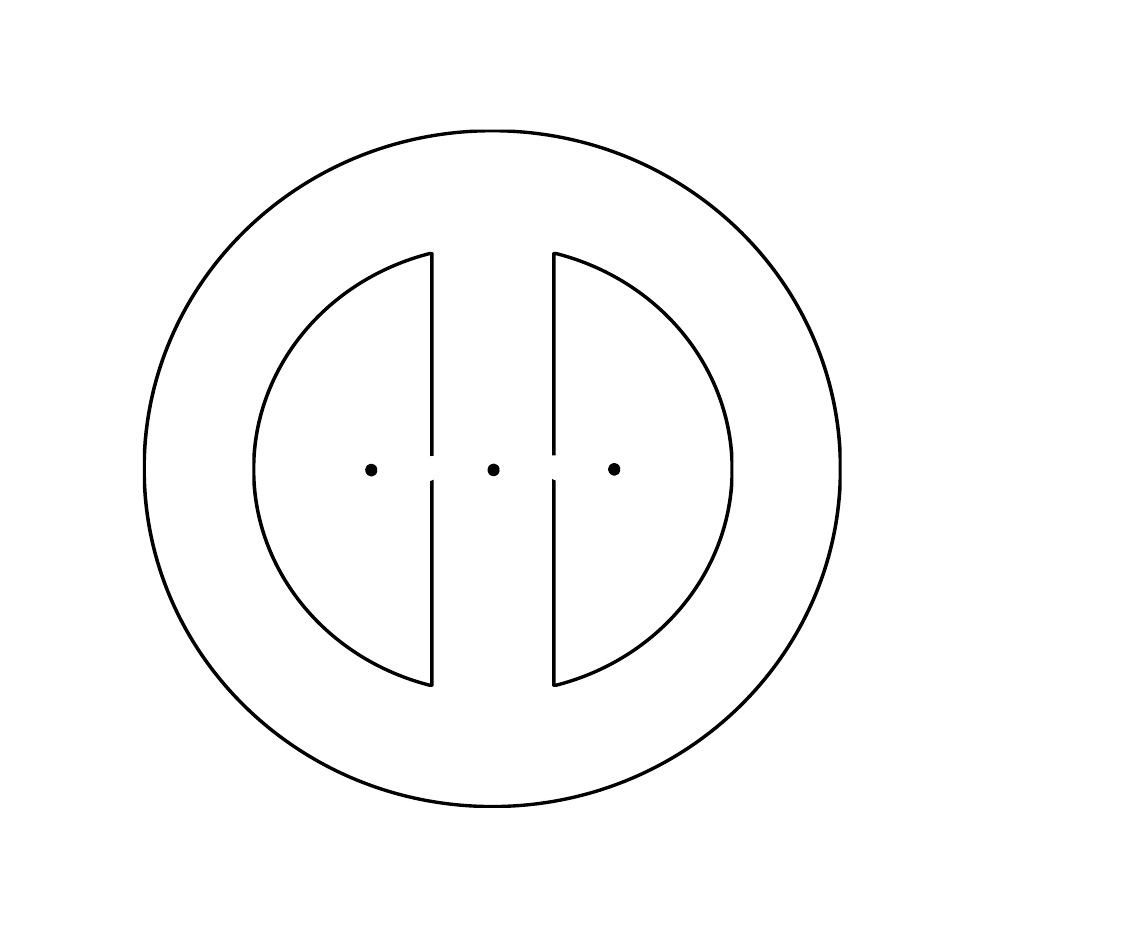}
	\label{ncvx}
		\vspace*{-0.4cm}
	\caption{The domain $X'$ in $\R^2$}
\end{figure}

We choose $r$ such that the closure of $D'=X'+iB_r$ does not intersect $L_{-1}$ and $L_1$,
the parallel translates of $L$ passing through the points $(\pm 1,0)$ respectively.
Now the choice of a branch of
\begin{equation}\label{root}
\sqrt{(a(z_1+1)+bz_2)(a(z_1-1)+bz_2)}
\end{equation}
defines a univalent holomorphic function $f$ on $D'$. 
If the channel is sufficiently thin, $f$ extends from the above part of the interrupted bridge to a neighbourhood of the origin.
Since a single turn around one of the branch points of (\ref{root}) changes the value we have exhibited two sheets over $0$.
Rounding off the eight corners of $X'$ we obtain an example with a nonconvex hole and a nonschlicht envelope of holomorphy.
\qed
\end{example}

It is instructive to relate Lemma \ref{polhull} to the results in \cite{JP2}.

\begin{example}\label{JPexp}\rm
{\bf a)} For $D_{2,r_1,r_2,r_3}$ like in (\ref{model}), 
it is enough to find the polynomial hull of the solid torus 
$K=\overline{B}_{r_1}+i\partial B_{r_3}$.
By convexity $\widehat{K}$ is contained in the closure of the polydisc tube 
$P=\overline{B}_{r_1}+i\overline{B}_{r_3}$.
The function $g=e^{z_1^2+z_2^2}$ has modulus $|g|=e^{|x|^2-|y|^2}$. 
Thus $\max_{K}|g|=r_1^2-r_3^2$ is obtained at the boundary $\partial B_{r_1}+i\partial B_{r_3}$ of $K$,
and we get
\[
\widehat{K}\subset L=\{|x|^2-|y|^2\leq r_1^2-r_3^2\}\cap P.
\]
To show that this is an equality we only have to observe 
that $L$ is foliated by the complex curves 
$\{z_1^2+z_2^2=c\}$, $\mbox{Re}(c)\leq r_1^2-r_3^2$
(pairs of discs if $\mbox{Re}(c)=\mbox{Im}(c)$ and annuli else),
with boundary in $K$. Equality then follows from the
maximum modulus principle.
Hence we have rederived (\ref{ED}) from Lemma \ref{polhull}.

For $n\geq 3$, our arguments concerning polynomial hulls are still valid.
Thus Remark \ref{n>=3} yields holomorphic extension to the set in (\ref{ED}),
and then we conclude by directly verifying that this set is pseudoconvex
as in \cite{JP2}. 
Note however that local pseudoconvexity of the complement of $\widehat{K}$
at points of the essential hull $\widehat{K}\backslash K$ is not true in general for $n\geq 3$.
The hull is still locally $1$-pseudoconcave, but this implies less pseudoconvexity for the complement.

{\bf b)} Looking at the "breaking point" $r_1=r_3$ leads us to considering 
$$\Delta=\{z=x+iy\in\C^2:|y|^2<|x|^2<1\}.$$
With respect to the standard real euclidean structure of $\C^2=\R^4$, 
it is congruent to the Hartogs triangle $\{z=(z_1,z_2)\in\C^2:|z_2|^2<|z_1|^2<1\}$.
However, its function theoretic properties are very different.
For example, since $\overline{\Delta}$ is polynomially convex,
$\Delta$ has no Nebenh\"ulle, in contrast to the Hartogs triangle.
\qed
\end{example}

It is rare that polynomial hulls can be described as explicitly as in the Example \ref{JPexp}. 
Therefore there is not much hope for more explicit information than in Theorem \ref{schlicht2d}. 
It is however possible to deduce qualitative properties like 

\begin{corollary}
Let $D$, $K_x$ and $Y$ be like in Lemma \ref{polhull}. Assume moreover that $0\in Y$,
and set $Y_R=R\cdot Y$, $K_R=K_x+\partial Y_R$, $D_R=D+iY_R$.
Then the following properties hold:
\begin{itemize}
\item[\bf (a)]
For every $R_0\geq 0$ there is $R_1>0$ such that 
$\widehat{K}_R\cap\{|y|\leq R_0\}=\emptyset$
if $R\geq R_1$.
\item[\bf (b)] $\textsf{E}(D+i\R^2)=\bigcup_{R>0}\textsf{E}(D_R)$.
\end{itemize}
\end{corollary}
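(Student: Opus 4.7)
The plan is to drive both parts by the entire function $g(z) = e^{z_1^2 + z_2^2}$, already exploited in Example \ref{JPexp}(a), whose modulus $|g(z)| = e^{|x|^2 - |y|^2}$ decouples real and imaginary contributions. Since by Oka--Weil the polynomial hull of a compact subset of $\C^2$ agrees with its $\CalO(\C^2)$-hull, $g$ is admissible in the defining inequality of $\widehat{K_R}$.

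For part \textbf{(a)}, I would set $M = \max_{x \in K_x} |x|$ and $d = \min_{y \in \partial Y} |y|$, noting $d > 0$ because the open set $Y$ contains $0$. On $K_R = K_x + i\partial Y_R$ one then has
$|g| \leq e^{M^2 - R^2 d^2}$, while any $z = x + iy$ with $|y| \leq R_0$ satisfies $|g(z)| \geq e^{-R_0^2}$. A point of the latter kind lying in $\widehat{K_R}$ would force $e^{-R_0^2} \leq e^{M^2 - R^2 d^2}$, i.e.\ $R^2 d^2 \leq M^2 + R_0^2$; it therefore suffices to pick any $R_1 > d^{-1}\sqrt{M^2 + R_0^2}$.

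For part \textbf{(b)}, I first identify the right-hand side with the Bochner tube of the convexified base: $D + i\R^2 = X + i\R^2$, and Bochner's theorem gives $\textsf{E}(D + i\R^2) = \ch(X) + i\R^2 = (X \cup K_x) + i\R^2$, the last equality because filling the strictly convex hole $K_x$ convexifies $X$. Lemma \ref{polhull} applied with $Y$ replaced by $Y_R$ produces $\textsf{E}(D_R) = D_R' \setminus \widehat{K_R}$ with $D_R' = (X \cup K_x) + iY_R \subset (X \cup K_x) + i\R^2$, which yields the inclusion $\bigcup_R \textsf{E}(D_R) \subseteq (X \cup K_x) + i\R^2$. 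For the converse, convexity of $Y$ together with $0 \in Y$ implies that $\{Y_R\}_{R \geq 1}$ is an increasing family exhausting $\R^2$; consequently, for any $z = x + iy$ in $(X \cup K_x) + i\R^2$ I choose $R$ large enough so that simultaneously $y \in Y_R$ (hence $z \in D_R'$) and part \textbf{(a)} applied with $R_0 = |y|$ ejects $z$ from $\widehat{K_R}$, placing $z$ in $\textsf{E}(D_R)$.

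The decisive step is part \textbf{(a)}, since the explicit quantitative bound is what makes the exhaustion in \textbf{(b)} purely routine. The quadratic exponent of $g$ is the essential feature: it creates a gap between the $O(1)$ behaviour of $|g|$ along the bounded set $K_x$ and its $e^{-R^2 d^2}$ decay along $i \partial Y_R$, whereas functions with linear exponents would offer no such margin and $g$ itself is ``universal'' in the sense that it makes no use of the specific shape of $Y$ beyond the positive distance $d$. A minor bookkeeping remark, to be flagged in the write-up, is that the identity $Y + RY = (1+R)Y$ for convex $Y$ containing $0$ makes the literal reading $D + iY_R$ equal to $X + iY_{1+R}$; this is a harmless reindexing that does not affect the exhaustion.
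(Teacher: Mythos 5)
Your proof is correct and follows essentially the same route as the paper: part \textbf{(a)} is obtained by separating points from $\widehat{K}_R$ via Gaussian functions (the paper uses the translated family $f_\zeta(z)=\exp((z_1-\zeta_1)^2+(z_2-\zeta_2)^2)$ with $f_\zeta(\zeta)=1$, while your single Gaussian $e^{z_1^2+z_2^2}$ gives the same separation uniformly), and part \textbf{(b)} is reduced to \textbf{(a)} using schlichtness of the envelopes and the exhaustion of $\R^2$ by the $Y_R$, which you merely make more explicit through Bochner's theorem and Lemma \ref{polhull}.
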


{\bf Proof:} 
General arguments imply actually that {\bf (b)} always holds 
for a family of subdomains exhausting $D$ in a suitable sense 
if its is know that all envelopes of the subdomains are schlicht.
Alternatively, it is easy to see that {\bf (a)} and {\bf (b)} are equivalent, 
again because of schlichtness of the envelopes.

To verify {\bf (a)}, consider again the Gaussian functions
\[
f_{\zeta}(z)=\exp\big((z_1-\zeta_1)^2+(z_2-\zeta_2)^2\big).
\]
For $R_0$ fixed, a direct verification very similar to what is done in Example \ref{JPexp}
shows that
$\max_{K_R}|f_{\zeta}|<c<1$ holds for all $\zeta =(\zeta_1,\zeta_2)\in K_x+i\overline{B}_{R_0}$,
provided $R$ is sufficiently large.
Since $f_{\zeta}(\zeta)=1$, we obtain {\bf (a)}. \qed

\end{document}